\theoremstyle{plain}
\newtheorem{theorem}{\bf{\textsc{Theorem}}}[section]
\newtheorem*{theorem*}{\bf{\textsc{Theorem}}}
\newtheorem{corollary}[theorem]{\bf{\textsc{Corollary}}}
\newtheorem{lemma}[theorem]{\bf{\textsc{Lemma}}}
\newtheorem{proposition}[theorem]{\bf{\textsc{Proposition}}}
\numberwithin{equation}{section}
\theoremstyle{definition}
\newtheorem{example}[theorem]{\bf{\textsc{Example}}}
\theoremstyle{definition}
\newtheorem{remark}[theorem]{\bf{\textsc{Remark}}}
\newcommand{\kk}{\mathbbm{k}}				
\newcommand{\CC}{\mathbb{C}}				
\newcommand{\RR}{\mathbb{R}}				
\newcommand{\Poly}{\mathcal P}			
\DeclareMathOperator{\End}{End}			
\DeclareMathOperator{\Hom}{Hom}			
\DeclareMathOperator{\GL}{GL}				
\DeclareMathOperator{\U}{U}					
\DeclareMathOperator{\Id}{Id}				
\renewcommand{\Re}{\operatorname{Re}}			
\DeclareMathOperator{\Tr}{Tr}				
\DeclareMathOperator{\Det}{Det}			
\renewcommand{\b}[1]{{\bf #1}}
\renewcommand{\Bar}{\overline}											
\newcommand{\JTP}[3]{\left\{#1,\,#2,\,#3\right\}}		
\newcommand{\B}[2]{B_{#1,\,#2}}											
\newcommand{\Set}[2]{\left\{#1\,\middle|\,#2\right\}} 
\newcommand{\sMatrix}[4]{\left(\begin{smallmatrix}#1&#2\\#3&#4\end{smallmatrix}\right)}
\tikzset{inner sep=0pt, 
  root/.style={circle,draw,minimum size=5pt,thick}, 
  cross/.style={cross out,draw,minimum size=4pt,thick},
  doubleline/.style={double distance=1.5pt,thick},
}
\title
{Inequalities for generalized minors}
\author{Benjamin Schwarz} 
\keywords{Generalized minor, determinant, singular value decomposition, Jordan pair, Jordan algebra}
\subjclass[2010]{Primary 15A45; Secondary 17C50, 15A18.}
\address{Benjamin Schwarz, Universit\"{a}t Paderborn,
Fakult\"{a}t f\"{u}r Elektrotechnik, Informatik und Mathematik,
Institut f\"{u}r Mathematik, Warburger Str. 100,
33098 Paderborn, Germany}
\email{bschwarz@math.upb.de}
\begin{document}
\begin{abstract}
It is a classical result that the absolute value of any $k$-minor of an $r\times s$ real or complex matrix is bounded by the product of its first $k$ singular values. We generalize this statement to the context of real or complex simple Jordan pairs with generalized minors given by Jordan algebra determinants.
\end{abstract}
\maketitle

\section*{Introduction}
The goal of this paper is a generalization of the following statement.
\begin{quote}\itshape
	Let $A$ be a real or complex $r\times s$ matrix, $r\leq s$, with singular values 
	$\sigma_1\geq\cdots\geq\sigma_r\geq0$. Then for $1\leq k\leq r$ the absolute value of any 
	$k$-minor of $A$ is bounded by the product $\sigma_1\cdots\sigma_k$.	
\end{quote}

This estimate can be proved using the Cauchy--Binet formula, see e.g.\ \cite[Theorem~4.1]{Br56}. We generalize this statement to the context of Jordan theory. Let $(V,\Bar V)$ be a real or complex simple Jordan pair with positive involution. Then any element $z\in V$ admits a singular value decomposition, and to any tripotent $e\in V$ there is associated a unital Jordan algebra $[e]\subseteq V$ and a Jordan algebra determinant $\Delta_e$.

\begin{theorem*}
	Let $(V,\Bar V)$ be a real or complex simple Jordan pair with positive involution and 
	rank $r$. Let $e\in V$ be a tripotent of rank $k$, and $z\in V$ be an element with singular 
	values $\sigma_1\geq\cdots\geq\sigma_r\geq 0$. Then
	\[
		\left|\Delta_e(z)\right| \leq \sigma_1\cdots\sigma_k.
	\]
\end{theorem*}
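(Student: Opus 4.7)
The plan is to mirror the classical matrix proof, which combines the singular value decomposition, the Cauchy--Binet expansion of a $k\times k$ minor, and the unitarity of the singular-vector matrices. Three Jordan-theoretic substitutes are required: a singular value decomposition in the Jordan pair, a multilinear expansion of $\Delta_e$ along a frame, and an orthogonality identity replacing unitarity.

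First I would invoke the singular value decomposition for Jordan pairs with positive involution to write
\[
z = \sigma_1 c_1 + \cdots + \sigma_r c_r,
\]
where $(c_1,\ldots,c_r)$ is a frame of pairwise orthogonal minimal tripotents in $V$. Since $\Delta_e$ is the Jordan algebra determinant on $[e]$, composed with the Peirce-$2$ projection $V\to[e]$, it is a polynomial of degree $k$ on $V$; substituting the SVD and polarizing yields an expansion
\[
\Delta_e(z) = \sum_{\substack{K\subseteq\{1,\ldots,r\}\\ |K|=k}} m_K(e)\prod_{i\in K}\sigma_i,
\]
where the coefficient $m_K(e) = \Delta_e\!\left(\sum_{i\in K}c_i\right)$ is the value of $\Delta_e$ on the rank-$k$ sub-tripotent indexed by $K$. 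In the matrix model these coefficients factor as products of complementary $k$-minors of the unitary SVD matrices, so that the expansion reproduces term-by-term the classical Cauchy--Binet formula.

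Granted the expansion, the theorem reduces to the orthogonality inequality $\sum_K |m_K(e)| \leq 1$, for then
\[
|\Delta_e(z)| \leq \sum_K |m_K(e)|\prod_{i\in K}\sigma_i \leq \sigma_1\cdots\sigma_k \sum_K |m_K(e)| \leq \sigma_1\cdots\sigma_k,
\]
where the middle step uses that $\sigma_1\cdots\sigma_k \geq \prod_{i\in K}\sigma_i$ for every $k$-subset $K\subseteq\{1,\ldots,r\}$.

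The main obstacle is establishing the $\ell^1$-bound $\sum_K |m_K(e)| \leq 1$ in the general Jordan pair setting. In the matrix case it is a Cauchy--Schwarz consequence of the unitarity identities $\det(AA^*) = \det(B^*B) = 1$, combined with the factorization $m_K(e) = \det(A_K)\det(B_K)$ of each coefficient as a product of a "left" and "right" minor. In the Jordan framework I expect the corresponding identity to follow from the orthogonality of the Peirce decomposition relative to the positive involution (which supplies the inner product that makes the SVD orthogonal), together with a factorization of $\Delta_e(\sum_{i\in K}c_i)$ coming from the two-sided $(V,\bar V)$ structure of the Jordan pair. Formulating and proving this orthogonality identity cleanly in the Jordan pair language — without reducing to a matrix model case by case — is the technical heart of the argument, and is where the Jordan-theoretic machinery of quadratic operators and Peirce calculus will do its real work.
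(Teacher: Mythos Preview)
Your approach is genuinely different from the paper's. You aim for a Cauchy--Binet style expansion and reduce the theorem to the coefficient bound $\sum_K |m_K(e)| \leq 1$. The paper instead argues variationally: it fixes $z$, views $f(e) = |\Delta_e(z)|^2$ as a smooth function on the compact manifold $S_k$ of rank-$k$ tripotents, and uses the identity $\Delta_e(z) = \Delta(e-z,\bar e)$ together with the logarithmic derivative of the Jordan pair determinant to compute $df$. At any maximum this forces $z \in V_2(e)\oplus V_0(e)$; in that aligned situation the singular values of $z$ split into those of $z_2$ and $z_0$, and one checks directly (via Proposition~\ref{prop:JDetRelations}) that $|\Delta_e(z)|$ equals the product of the singular values of $z_2$, hence is at most $\sigma_1\cdots\sigma_k$.

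Your outline has a real gap precisely at the point you yourself flag. In the matrix case the bound $\sum_K |m_K(e)|\leq 1$ works because each $m_K(e)$ factors as a product $\det(A_K)\det(B_K)$ of a left and a right $k$-minor of the unitary SVD factors, after which Cauchy--Schwarz together with the Cauchy--Binet identities $\sum_K|\det A_K|^2 = \sum_K|\det B_K|^2 = 1$ finishes the job. A general simple Jordan pair has no two-sided multiplicative structure, so there is no evident way to split $m_K(e) = \Delta_e\big(\sum_{i\in K} c_i\big)$ into two independent pieces to which Cauchy--Schwarz can be applied; orthogonality of the Peirce decomposition with respect to the trace form does not by itself produce such a factorization. (There is also a secondary gap: your expansion tacitly assumes that the non-square-free monomials in the $\sigma_i$ vanish, equivalently that $\Delta_e$ annihilates every element of rank below $k$; this is true but requires an argument.) The paper's analytic route sidesteps the whole difficulty by never attempting to control $\Delta_e$ at a generic, unaligned pair $(e,z)$.
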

Our proof is analytic and quite elementary. Let us describe the main idea in the classical context with $V=\kk^{r\times s}$, $\kk=\RR$ or $\CC$. Then an element $e\in\kk^{r\times s}$ is tripotent if it satisfies the identity $e = ee^*e$, and it turns out that the corresponding Jordan algebra determinant $\Delta_e$ is given by
\[
	\Delta_e(z) = \Det(\b 1_r + (e-z)e^*)\quad(z\in\kk^{r\times s}).
\]
Moreover, the set $S_k$ of all rank-$k$ tripotents forms a real smooth compact submanifold of $\kk^{r\times s}$. Now the theorem follows from plain analysis of the smooth map $f:S_k\to\RR$ given by $f(e) = |\Delta_e(z)|^2$ with fixed $z\in\kk^{r\times s}$.

We note that choosing the tripotent $e$ appropriately, $\Delta_e(z)$ coincides with a given $k$-minor of $z$, see Example~\ref{ex:usualminors} for details. Therefore, the theorem indeed generalizes the classical statement.

In the first section, we discuss generalized minors in the context of simple Jordan pairs over arbitrary fields of characteristic $\neq 2$ in some detail. We note that even though the basic definition of generalized minors (resp.\ the corresponding Jordan algebra determinants) is widely known, our results seem to be new. In Section~\ref{sec:inequalities} we specialize to the case of real or complex simple Jordan pairs, prove our main theorem, and provide an application involving some representation theory on the space of polynomials on $V$, see Corollary~\ref{cor:growthestimate}.

\section{Generalized minors}
In this section we consider Jordan pairs without fixing an involution. Let $(V^+,V^-)$ be a finite dimensional simple Jordan pair over a field $\kk$ of characteristic $\neq 2$, and with quadratic maps $Q^\pm:V^\pm\to\Hom(V^\mp,V^\pm)$. As usual, we omit the signs and simply write $Q_z:=Q^\pm(z)$ for $z\in V^\pm$. Moreover, for $x,z\in V^\pm$ and $y\in V^\mp$ the Jordan triple product $\JTP{x}{y}{z}$ and the operators $D_{x,y}$ and $Q_{x,z}$ are defined by polarization of $Q_x$, 
\[
	\JTP{x}{y}{z}:=D_{x,y}z:=Q_{x,z}y:=Q_{x+z}y-Q_xy-Q_zy.
\]
The results of this section are independent of the choice of $\kk$. We refer to \cite{Lo75, Lo77} for a detailed introduction to Jordan pairs. We briefly recall some basic notions necessary for our purposes.

An element $\b e=(e_+,e_-)$ in $(V^+,V^-)$ is \emph{idempotent}, if $e_+ = Q_{e_+}e_-$ and $e_- = Q_{e_-}e_+$. The corresponding \emph{Peirce decomposition} is given by
\[
	V^\pm=V_2^\pm(\b e)\oplus V_1^\pm(\b e)\oplus V_0^\pm(\b e)\quad\text{with}\quad
	V_\nu^\pm(\b e):=\Set{z\in V^\pm}{D_{e_\pm,e_\mp}z = \nu\,z}.
\]
For the following fix $\sigma\in\{+,-\}$. The Peirce 2-space $V^\sigma_2(\b e)$ coincides with the \emph{principal inner ideal} $[e_\sigma]:=Q_{e_\sigma}V^{-\sigma}$ generated by $e_\sigma$. Moreover, $[e_\sigma]$ forms a unital Jordan algebra with product $x\circ y:=\frac{1}{2}\JTP{x}{e_{-\sigma}}{y}$ and unit element $e_\sigma$, see also Remark~\ref{rmk:principalinnerideal}. By definition, the \emph{Jordan algebra determinant} $\Delta_{\b e}^\sigma$ of $[e_\sigma]$ is the exact denominator of the rational map $z\mapsto z^{-1}$, normalized to $\Delta_\b e^\sigma(e_\sigma) = 1$. As usual, we expand $\Delta_{\b e}^\sigma$ to a polynomial on all of $V^\sigma$ by firstly projecting onto $[e_\sigma]$ (along $V_1^\sigma(\b e)\oplus V_0^\sigma(\b e)$) and then evaluating $\Delta_{\b e}^\sigma$. By abuse of notation, this expansion is also denoted by $\Delta_{\b e}^\sigma$ and called the \emph{generalized minor} associated to $\b e$.

There is also a determinant attached to the Jordan pair $(V^+,V^-)$, which we describe next. A pair $(x,y)\in V^\sigma\times V^{-\sigma}$ is \emph{quasi-invertible}, if the \emph{Bergman operator} $\B{x}{y}:=\Id - D_{x,y}+Q_xQ_y$ is invertible. In this case,
\[
	x^y:=\B{x}{y}^{-1}(x-Q_xy)
\]
is the \emph{quasi-inverse} of $(x,y)$. The exact denominator $\Delta:V^+\times V^-\to\kk$ of the rational map $(x,y)\mapsto x^y$, normalized to $\Delta(0,0)=1$, is the \emph{Jordan pair determinant} (also often called the \emph{generic norm}, see \cite[\S\,16.9]{Lo75}).

There is a simple relation between Jordan algebra determinants and the Jordan pair determinant, which we already noted in \cite{SS12} for the special case of complex simple Jordan pairs. To the best of our knowledge this relation has not been stated elsewhere.

\begin{proposition}\label{prop:MinorVsDeterminant}
	Let $\b e=(e^+,e^-)$ be an idempotent of $(V^+,V^-)$. Then
	\begin{align}\label{eq:MinorVsDeterminant}
		\Delta_\b e^+(x) = \Delta(e_+ - x,e_-),\quad
		\Delta_\b e^-(y) = \Delta(e_+,e_--y)
	\end{align}
	for all $x\in V^+$, $y\in V^-$.
\end{proposition}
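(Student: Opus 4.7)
The plan is to reduce the identity to the Peirce $2$-space $V_2^+(\b e) = [e_+]$ via an addition formula for the generic norm $\Delta$, and then to close with the classical identification of generic norms in the Jordan pair of a unital Jordan algebra. I focus on the formula for $\Delta_\b e^+$; the one for $\Delta_\b e^-$ follows by symmetry.

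Write $x = x_2 + x_1 + x_0$ for the Peirce decomposition of $x$ with respect to $\b e$. By construction $\Delta_\b e^+(x) = N_J(x_2)$, where $N_J$ denotes the Jordan algebra norm of $J := [e_+]$. For the right-hand side, I would invoke the standard addition formula $\Delta(u + v,\,y) = \Delta(u,\,y)\,\Delta(v,\,y^u)$, which arises from the operator identity $\B{u+v}{y} = \B u y\,\B v {y^u}$ for quasi-invertible $(u,\,y)$ together with $\det \B\cdot\cdot = \Delta^g$ (with $g$ the genus) and the normalization $\Delta(0,0) = 1$. Applying this with $u = e_+ - x_2$, $v = -(x_1 + x_0)$, $y = e_-$, and using stability of the sub-Jordan pair $(V_2^+, V_2^-)$ under quasi-inversion (so that $e_-^{e_+ - x_2} \in V_2^-(\b e)$), I obtain
\[
	\Delta(e_+ - x,\,e_-) \;=\; \Delta(e_+ - x_2,\,e_-)\,\Delta\bigl(-(x_1 + x_0),\,e_-^{e_+ - x_2}\bigr).
\]

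The key auxiliary step is to show that $\Delta(v, y) = 1$ whenever $v \in V_1^+(\b e) \oplus V_0^+(\b e)$ and $y \in V_2^-(\b e)$. By the Peirce rules, $D_{v, y}$ strictly decreases the Peirce grade on $V^+$, $Q_y$ annihilates $V_1^+ \oplus V_0^+$, and $Q_v Q_y$ sends $V_2^+$ into $V_0^+$; hence $\B v y = \Id - D_{v, y} + Q_v Q_y$ is block lower-unitriangular with respect to $V^+ = V_2^+ \oplus V_1^+ \oplus V_0^+$, so $\det \B v y = 1$, and the normalization forces $\Delta(v, y) = 1$. Inserted in the display above, the second factor equals $1$, and the problem reduces to proving $\Delta(e_+ - x_2, e_-) = N_J(x_2)$ for $x_2 \in V_2^+$.

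For this final step I appeal to the classical fact that in the Jordan pair $(J, J)$ of a unital Jordan algebra with unit $e$, one has $B(a, e) = U_{e - a}$, giving $\Delta_{(J, J)}(e - a, e) = N_J(a)$. Transferring to the full pair requires identifying $\Delta_V$ restricted to $V_2^+ \times V_2^-$ with $\Delta_{(J, J)}$, which is the main technical obstacle, since a priori the two generic norms could differ by a power. My resolution would be an analogous block-diagonal analysis of $\B x y$ for $(x, y) \in V_2^+ \times V_2^-$: its diagonal blocks factor $\det \B x y$ as the sub-pair's Bergman determinant times an explicit $V_1^+$-contribution governed by the $[e_+]$-module structure on $V_1^+$, and extracting the correct $g$-th root via the normalization $\Delta(0, 0) = 1$ and the genus relationship between $V$ and the sub-pair pins down the clean identification.
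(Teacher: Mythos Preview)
Your argument is essentially correct, but the reduction step takes a different and heavier route than the paper's. To pass from $x$ to its Peirce-$2$ component $x_2$, the paper uses the single symmetry identity $\Delta(u,Q_vw)=\Delta(w,Q_vu)$ from \cite[\S\,3.5]{Lo75}: since $e_-=Q_{e_-}e_+$ and $Q_{e_-}x=Q_{e_-}x_2$, two applications of this identity give $\Delta(e_+-x,e_-)=\Delta(e_+,Q_{e_-}(e_+-x))=\Delta(e_+,Q_{e_-}(e_+-x_2))=\Delta(e_+-x_2,e_-)$ in one line. Your route via the cocycle formula $\Delta(u+v,y)=\Delta(u,y)\,\Delta(v,y^u)$ together with the block-unitriangularity of $\B{v}{y}$ for $v\in V_1^+\oplus V_0^+$, $y\in V_2^-$ is valid, but it needs the extra observation that $(e_+-x_2,e_-)$ is only generically quasi-invertible, so you must pass through a Zariski-density argument to conclude for all $x$; you should make that explicit. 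For the final step, both arguments land in the same place: one must identify the restriction of $\Delta$ to $[e_+]\times[e_-]$ with the generic norm of that subpair and then invoke the Jordan-algebra identity $\Delta_{(J,J)}(e-a,e)=N_J(a)$. The paper simply asserts the identification (it is standard in Loos's framework, since quasi-inverses in the Peirce-$2$ subpair agree with those in the ambient pair, so the exact denominator is the same), whereas you correctly flag it as the genuine content and sketch a Bergman-determinant factorization; your sketch is on the right track but is not more complete than the paper's assertion. In short: your proof works, the paper's reduction is slicker, and the substantive step is the same in both.
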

\begin{proof}
Due to the duality of the Jordan pairs $(V^+,V^-)$ and $(V^-,V^+)$, it suffices to prove the first formula in \eqref{eq:MinorVsDeterminant}. If the Jordan pair is the one associated to a unital Jordan algebra $J$, i.e., $(V^+,V^-)=(J,J)$, and if $e_+$ is the unit element of $J$, then \eqref{eq:MinorVsDeterminant} is an immediate consequence of \cite[\S\,16.3(ii)]{Lo75}. We reduce the general case to this Jordan algebra case. Let $x=x_2+x_1+x_0$ be the decomposition of $x$ according to the Peirce decomposition with respect to $\b e$. By definition, $\Delta_\b e^+(x)=\Delta_\b e^+(x_2)$. On the other hand, due to \cite[\S\,3.5]{Lo75} the relation $\Delta(u,Q_vw) = \Delta(w,Q_vu)$ holds for all $u,w\in V^+$, $v\in V^-$, so it follows that $\Delta(e_+-x,e_-) = \Delta(e_+-x_2,e_-)$ since $e_-=Q_{e_-}e_+$ and $Q_{e_-}x=Q_{e_-}x_2$. Therefore it suffices to assume $x=x_2$. We may consider $([e_+],[e_-])$ as a subpair of $(V^+,V^-)$. Then its Jordan pair determinant coincides with the restriction of $\Delta$ to $[e_+]\times [e_-]$. Moreover, due to \cite[\S\,1.11]{Lo75} we may identify $([e_+],[e_-])$ with the Jordan pair $(J,J)$ with $J=[e_+]$ via the Jordan algebra isomorphism $Q_{e_-}:[e_+]\to [e_-]$. Now we are in the Jordan algebra case, and \eqref{eq:MinorVsDeterminant} follows from \cite[\S\,16.3(ii)]{Lo75}.
\end{proof}

\begin{remark}\label{rmk:principalinnerideal}
	We note that the Jordan algebra structure on $[e_\sigma]= V^\sigma_2(\b e)$ is independent of 
	$e_{-\sigma}$, since for $x=Q_{e_\sigma}u$ in $[e_\sigma]$, the fundamental formula for the 
	quadratic map yields
	\[
		x^2 = Q_xe_{-\sigma} = Q_{Q_{e_\sigma}u}e_{-\sigma} = Q_{e_\sigma}Q_uQ_{e_\sigma}e_{-\sigma}
		=Q_{e_\sigma}Q_ue_\sigma,
	\]
	and polarization of $x^2$ also shows that $x\circ y$ is independent of $e_{-\sigma}$. It follows
	that the Jordan algebra determinant $\Delta_{\b e}^\sigma:[e_\sigma]\to\kk$ does not depend on 
	$e_{-\sigma}$. However, its expansion to $V^\sigma$ depends on $e_{-\sigma}$ since the Peirce 
	spaces $V_1^\sigma(\b e)$ and $V_0^\sigma(\b e)$ are dependent on $e_{-\sigma}$. 
\end{remark}

\begin{example}\label{ex:usualminors}
	Consider the simple Jordan pair $(\kk^{r\times s},\kk^{s\times r})$ with $r\leq s$ and quadratic 
	maps given by $Q_xy=xyx$. Then, idempotents are pairs of matrices $(e_+,e_-)$ satisfying
	$e_+e_-e_+=e_-$ and $e_-e_+e_-=e_+$, and the Jordan pair determinant is given by $\Delta(x,y) = 
	\Det(\b 1_r - xy)$. For $1\leq k\leq r$ and tuples $I=(i_1,\ldots, i_k)$, $J=(j_1,\ldots,j_k)$ 
	with $1\leq i_1<\cdots<i_k\leq r$ and $1\leq j_1<\cdots<j_k\leq s$ let $e_+$ be the
	$r\times s$-matrix defined by 
	\begin{align}\label{eq:minortripotent}
		(e_+)_{ij} := \begin{cases}
			1 & \text{ if $(i,j) = (i_\ell,j_\ell)$ for some $1\leq\ell\leq k$,}\\
			0 & \text{ else,}
		\end{cases}
	\end{align}
	and set $e_-:=e_+^\top$, the transpose matrix of $e_+$. Then, it is straightforward to show that 
	$(e_+,e_-)$ is an idempotent, and the generalized minor 
	\[
		\Delta_\b e^+(z) = \Det(\b 1_r - (e_+-z)e_-)\quad(z\in\kk^{r\times s})
	\]
	coincides with the usual minor corresponding to rows and columns given by $I$ and $J$, 
	respectively.
\end{example}

\begin{proposition}\label{prop:JDetRelations}
	Let $\b e,\b c\in(V^+,V^-)$ be idempotents. 
	If $[e_+] = [c_+]$, then
	\begin{enumerate}[\upshape(i)]
		\item $\Delta_\b e^+(x) = \Delta_\b e^+(c_+)\cdot\Delta_\b c^+(x)$
					for all $x\in[e_+]$,\vspace{2mm}
		\item $\Delta_\b e^-(y)
					= \Delta_\b e^-(c_-)\cdot\Delta_\b c^-(y)$
					for all $y\in V^-$,\vspace{2mm}
		\item $\Delta_\b c^+(e_+)\cdot\Delta_\b c^-(e_-) = 1$.\vspace{2mm}
	\end{enumerate}
	If $[e_-]=[c_-]$, the same formulas hold when $+$ and $-$ are interchanged.
\end{proposition}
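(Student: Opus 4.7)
\emph{Approach.} Both (i) and (ii) will be attacked through Proposition~\ref{prop:MinorVsDeterminant}, which converts generalized minors into instances of the Jordan pair determinant $\Delta$. The key tool is the addition formula
\[
	\Delta(x, y+v) = \Delta(x, y)\cdot\Delta(x^y, v),
\]
valid whenever the relevant quasi-inverses exist, together with its mirror version for sums in the first argument. With these, (i) and (ii) reduce to computing certain quasi-inverses, which in turn can be carried out inside the Jordan subpair $([e_+], [e_-])$ where the Jordan algebra case is directly applicable.

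For (ii), writing $e_- - y = (e_- - c_-) + (c_- - y)$ and applying the addition formula to $\Delta_\b e^-(y) = \Delta(e_+, e_- - y)$ reduces the statement to the claim
\[
	e_+^{e_- - c_-} = c_+.
\]
To prove this, decompose $c_- = c_{-,2} + c_{-,1} + c_{-,0}$ along the Peirce decomposition $V^- = V_2^-(\b e)\oplus V_1^-(\b e)\oplus V_0^-(\b e)$. The identity $Q_{c_+}c_- = c_+$ together with the Peirce rules $Q_{V_2^+}V_1^- = Q_{V_2^+}V_0^- = 0$ forces $(c_+, c_{-,2})$ to be an idempotent of the Jordan subpair $([e_+], [e_-])$. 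Under the Jordan algebra isomorphism $Q_{e_-}\colon J \to [e_-]$ with $J := [e_+]$, this subpair is identified with $(J, J)$, and $(c_+, c_{-,2})$ corresponds to $(c_+, c_+^{-1})$ in $(J,J)$; the classical Jordan-algebra quasi-inverse $1^{1-c_+^{-1}} = c_+$ then yields $e_+^{e_- - c_{-,2}} = c_+$. The remaining quasi-inverse steps by $-c_{-,1}$ and $-c_{-,0}$ leave $c_+$ unchanged, since by the Peirce rules the corresponding Bergman operators are unipotent and fix $c_+$; iterating $x^{y+v} = (x^y)^v$ then gives the claim. Part (i) is proved analogously via the mirror addition formula: the reduction yields $e_-^{e_+ - c_+} = c_{-,2}$, and one additionally invokes the Peirce observation that $\Delta(z, y) = \Delta(z, y_2)$ whenever $z \in V_2^+(\b e)$, where $y_2$ is the $V_2^-(\b e)$-component of $y$, to convert $c_{-,2}$ back to $c_-$ inside $\Delta$.

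Part (iii) then follows by evaluating (i) at $x = e_+$ and (ii) at $y = e_-$, which give $\Delta_\b c^+(e_+) = \Delta_\b e^+(c_+)^{-1}$ and $\Delta_\b c^-(e_-) = \Delta_\b e^-(c_-)^{-1}$; multiplying, (iii) is equivalent to $\Delta_\b e^+(c_+)\cdot\Delta_\b e^-(c_-) = 1$. The relation $c_{-,2} = Q_{e_-}c_+^{-1}$ read off from the subpair identification above, together with the fact that $Q_{e_-}\colon J\to[e_-]$ is an isomorphism of unital Jordan algebras, gives $\Delta_\b e^-(c_-) = \Delta_\b e^-(c_{-,2}) = \Delta_\b e^+(c_+^{-1})$, so this further reduces to the classical identity $\Delta_\b e^+(c_+)\cdot\Delta_\b e^+(c_+^{-1}) = 1$, which follows from $Q_{c_+}c_+^{-1} = c_+$ and the defining property $\Delta_\b e^+(Q_xy) = \Delta_\b e^+(x)^2\,\Delta_\b e^+(y)$ of the Jordan algebra determinant. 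The main obstacle throughout will be the Peirce bookkeeping: verifying that the off-Peirce-2 pieces $c_{-,1}, c_{-,0}$ contribute trivially to every relevant Bergman operator, and that $\Delta(z,\cdot)$ is Peirce-2-invariant for $z\in V_2^+(\b e)$, amounts to a somewhat involved but entirely routine case-by-case check using the Peirce multiplication rules.
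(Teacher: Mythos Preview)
Your proposal is correct and, for the substantive part (ii), follows the same route as the paper: both reduce via the addition formula $\Delta(x,y+v)=\Delta(x,y)\,\Delta(x^y,v)$ to the claim $e_+^{\,e_--c_-}=c_+$, establish $e_+^{\,e_--c_{-,2}}=c_+$ by working inside the subpair $([e_+],[e_-])\cong(J,J)$, and then absorb the off-Peirce-$2$ pieces $c_{-,1}+c_{-,0}$. The only technical difference is that the paper invokes the symmetry formula and then the shifting formula \cite[\S3.5]{Lo75} for this last step, whereas you use the iteration formula $x^{y+v}=(x^y)^v$ together with the (correct) observation that the Bergman operator $B_{c_+,\,-c_{-,1}-c_{-,0}}$ is unipotent with respect to the Peirce grading. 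For (i) the paper simply cites the classical mutation theory of Jordan algebras \cite[V.\S3]{BK66} rather than giving an argument, so your explicit treatment, parallel to (ii) via the mirror addition formula and the Peirce invariance $\Delta(z,y)=\Delta(z,y_2)$ for $z\in V_2^+(\b e)$, is a genuine self-contained alternative. For (iii) both arguments first evaluate (i) at $x=e_+$; your reduction then proceeds via the identification $\Delta_\b e^-(c_-)=\Delta_\b e^+(c_+^{-1})$ and the norm identity $N(a)\,N(a^{-1})=1$, while the paper instead uses $\Delta(Q_uv,w)=\Delta(Q_uw,v)$ to rewrite $\Delta_\b e^-(c_-)=\Delta_\b e^+(Q_{e_+}c_-)$ and observes that $Q_{e_+}c_-=e_+^{\,2}$ in the algebra $[c_+]$---these are equivalent computations, since $Q_{e_+}c_-=Q_{e_+}c_{-,2}=c_+^{-1}$ in $J$.
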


\begin{proof}
The first formula is well-known from the theory of mutations of Jordan algebras, see e.g.\ \cite[V.\S\,3]{BK66}. The second formula needs different arguments, since $[e_-]$ might differ from $[c_-]$. We claim that $(e_+,e_--c_-)$ is quasi-invertible with quasi-inverse $e_+^{e_--c_-}=c_+$. In this case, (ii) follows from standard identities of the Jordan pair determinant \cite[\S\,16.11]{Lo75},
\begin{align*}
	\Delta_\b e^-(y) &= \Delta(e_+,e_--y) \\
		&= \Delta(e_+,e_--c_-+c_--y) \\
		&= \Delta(e_+,e_--c_-)\Delta(e_+^{e_--c_-},c_--y) \\
		&= \Delta_\b e^-(c_-)\,\Delta(c_+,c_--y) \\
		&= \Delta_\b e^-(c_-)\,\Delta_\b c^-(y)\;.
\end{align*}
In order to show quasi-invertibility of $(e_+,e_--c_-)$, consider the decomposition $c_- = c_2\oplus c_1\oplus c_0$ of $c_-$ according to the Peirce decomposition of $V^-$ with respect to $\b e$. Since $\b c$ is an idempotent, the Peirce rules \cite[\S\,5.4]{Lo75} yield the following relations:
\[
	Q_{c_+}c_2 = c_+\;,\quad
	Q_{c_2}c_+ \oplus \JTP{c_2}{c_+}{c_1} \oplus Q_{c_1}c_+ = c_2\oplus c_1\oplus c_0\;.
\]
Comparing the components of the Peirce spaces in the second identity, we conclude that the pair $(c_+,c_2)$ is also idempotent with $[c_+]=[e_+]$ and $[c_2]=[e_-]$. By assumption, $c_+$ is invertible in the Jordan algebra $[e_+]$. Therefore, $c_2$ is invertible in the Jordan algebra $[e_-]$, and it follows that $(e_--c_2,e_+)$ is quasi-invertible with quasi-inverses
\[
	(e_--c_2)^{e_+} = c_2^{-1}-e_-,
\]
where $c_2^{-1}$ is the inverse of $c_2$ in $[e_-]$, see \cite[\S\,3.1]{Lo75}. Now recall that Jordan algebra inverses satisfy $a^{-1} = P_a^{-1}a$, where $P_a$ is the quadratic operator corresponding to $a\in[e_-]$. Here, $P_a = Q_aQ_{e_+}|_{[e_-]}$, so we obtain
\[
	c_2^{-1}=(Q_{c_2}Q_{e_+}|_{[e_-]})^{-1}c_2 = Q_{e_-}(Q_{c_2}|_{[c_2]})^{-1}c_2 = Q_{e_-}c_+.
\]
Due to the symmetry formula \cite[\S\,3.3]{Lo75} for quasi-inverses it follows that
\[
	e_+^{e_--c_2} = e_+ + Q_{e_+}(e_--c_2)^{e_+} = e_+ + Q_{e_+}(Q_{e_-}c_+ - e_-) = c_+.
\]
Finally, the shifting formula \cite[\S\,3.5]{Lo75} yields that $(e_+,e_--c_-)$ is quasi-invertible if and only if $(e_+,e_--c_2)$ is quasi-invertible, and both have the same quasi-inverse. It remains to show (iii). Since $\Delta(Q_uv,w) = \Delta(Q_uw,v)$ for all $u\in V^+$, $v,w\in V^-$, Proposition~\ref{prop:MinorVsDeterminant} yields the relation $\Delta_\b e^-(c_-) = \Delta_\b e^+(Q_{e_+}c_-)$. Now applying (i), we obtain
\begin{align}\label{eq:DetRelation}
	\Delta_\b e^-(c_-) = \Delta_{\b e}^+(c_+)\cdot\Delta_\b c^+(Q_{e_+}c_-).
\end{align}
Since $Q_{e_+}c_- = e_+^2$ in the Jordan algebra $[c_+]$, the second term on the right hand side becomes $\Delta_\b c^+(e_+)^2$. Moreover, setting $x=e_+$ in (i) yields $\Delta_\b c^+(e_+)=\Delta_\b e^+(c_+)^{-1}$, so \eqref{eq:DetRelation} implies (iii). By duality of the Jordan pairs $(V^+,V^-)$ and $(V^-,V^+)$ the same statement holds when $+$ and $-$ are interchanged.
\end{proof}

\begin{remark}
	We note that Proposition~\ref{prop:JDetRelations}(i) does not necessarily hold for all
	$x\in V^\sigma$, as the following calculation illustrates. As in Example~\ref{ex:usualminors}
	consider the Jordan pair $(\kk^{r\times s},\kk^{r\times s})$. Let $\b e = (e_+,e_-)$ be defined 
	by 
	\[
		e_+=\begin{pmatrix} A & 0 \\ 0 & 0\end{pmatrix},\quad
		e_-=\begin{pmatrix} A^{-1} & B \\ C & CAB\end{pmatrix},
	\]
	where $A\in\kk^{k\times k}$ is invertible, and $B\in\kk^{k\times(s-k)}$,
	$C\in\kk^{(r-k)\times k}$ are arbitrary. Any such pair of matrices is an idempotent of 
	$(\kk^{r\times s},\kk^{r\times s})$, and yields the same principal inner ideal in
	$\kk^{r\times s}$,
	\[
		[e_+] = \Set{\begin{pmatrix} a & 0\\0 & 0\end{pmatrix}}{a\in\kk^{k\times k}}.
	\]
	The generalized minors are given by
	\begin{align*}
		\Delta_\b e^+(x) &= \Det(\b 1_r - (e_+-x)e_-) 
			= \Det\begin{pmatrix} aA^{-1}+bC & -AB+aB+bCAB\\cA^{-1}+dC & 1+cB+dCAB\end{pmatrix}
		\intertext{where $x = \sMatrix{a}{b}{c}{d}\in\kk^{r\times s}$ with 
		$a\in\kk^{k\times k}$, and}
		\Delta_\b e^-(y) &= \Det(\b 1_r - e_+(e_--y))
			= \Det\begin{pmatrix} A\alpha & -A(B-\beta) \\0 & \b 1_{r-k}\end{pmatrix}
			= \Det A\cdot\Det\alpha
	\end{align*}
	where $y = \sMatrix{\alpha}{\beta}{\gamma}{\delta}\in\kk^{s\times r}$ with 
	$\alpha\in\kk^{k\times k}$. We see that $\Delta^+_\b e(x)$ simplifies to
	$\Det A^{-1}\cdot\Det a$ only if $x\in[e_+]$ or $B=0$ and $C=0$, so 
	Proposition~\ref{prop:JDetRelations}(i) fails for $x\neq[e_+]$, e.g.\ in the case where
	$B\neq 0$ and $\b c=\left(\sMatrix{1}{0}{0}{0}),\sMatrix{1}{0}{0}{0}\right)$.
\end{remark}

\begin{remark}
	Strictly speaking, Proposition~\ref{prop:JDetRelations}(ii) is not needed for the purpose of 
	this paper. However we find it worthwhile to include this formula, since it is a rather 
	suprising identity, in particular considered in contrast to 
	Proposition~\ref{prop:JDetRelations}(i) with its restricted domain.
\end{remark}

\section{Inequalities for generalized minors}\label{sec:inequalities}
In this section, let $(V,\Bar V)$ be a simple Jordan pair over $\kk=\RR$ or $\kk=\CC$ with positive involution $\vartheta:V\to \Bar V$. For convenience, we set $\bar z:=\vartheta(z)$ for $z\in V$. 
Recall that an element $e\in V$ is a \emph{tripotent}, if $\b e:=(e,\bar e)$ is an idempotent. All notions and results of the last section also apply to this idempotent, and since $e$ uniquely determines the idempotent, we simplify the notation and set
\[
	V_\nu(e):= V_\nu^+(\b e),\quad 
	\Delta_e(x):=\Delta_\b e^+(x),
\]
for $\nu=2,1,0$ and $x\in V$.

Two tripotents $e,c\in V$ are \emph{strongly orthogonal}, if $e\in V_0(c)$ (or equivalently $c\in V_0(e)$), and $e$ is called \emph{primitive}, if it cannot be written as a sum of two non-zero strongly orthogonal tripotents. Any tripotent is the sum of primitive tripotents, and the number of summands is called its \emph{rank}. A \emph{frame} is a maximal system $(e_1,\ldots, e_r)$ of primitive strongly orthogonal tripotents. Here, $r$ is independent of the choice of the frame, and called the \emph{rank} of the Jordan pair $V$.

Due to \cite[\S3.12]{Lo77} any element $z\in V$ admits a singular value decomposition, i.e.,
\begin{align}\label{eq:singvaldec}
	z = \sigma_1e_1+\cdots+\sigma_r e_r
\end{align}
where $(e_1,\ldots, e_r)$ is a frame of tripotents and $\sigma_1\geq\cdots\geq\sigma_r\geq 0$ are uniquely determined real numbers, called the \emph{singular values of $z$}.

\begin{example}
	In the case $V=\kk^{r\times s}$ discussed in our previous examples, a positive involution is 
	given by the Hermitian transpose, $z\mapsto z^*\in\kk^{s\times r}$. Then, an element 
	$e\in\kk^{r\times s}$ is a tripotent if and only if $e=ee^*e$, i.e., if $e$ is a partial 
	isometry. The element constructed in \eqref{eq:minortripotent} is in fact a tripotent. 
	The singular value decomposition \eqref{eq:singvaldec} coincides with the usual one. Indeed, let 
	$z=U_1\Sigma U_2$ be the usual singular value decomposition with $U_1\in\U_r(\kk)$, 
	$U_2\in\U_s(\kk)$ and diagonal matrix $\Sigma\in\kk^{r\times s}$ with entries 
	$\sigma_1\geq\cdots\geq\sigma_r\geq 0$. For	$1\leq k\leq r$ set $e_i:=U_1E_iU_2$ where $E_i$ 
	denotes the matrix with $1$ at the $(i,i)$'th position and $0$ elsewhere. Then
	$(e_1,\ldots, e_r)$ is a frame of tripotents, and $z=\sigma_1e_1+\cdots+\sigma_r e_r$.
\end{example}

\begin{theorem}\label{thm:MinorInequality}
	Let $e\in V$ be a tripotent of rank $k$, and $z\in V$ be an element with singular values 
	$\sigma_1\geq\cdots\geq\sigma_r\geq 0$. Then
	\[
		|\Delta_e(z)| \leq \sigma_1\cdots\sigma_k\,.
	\]
\end{theorem}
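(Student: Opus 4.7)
Following the strategy sketched in the introduction, I will maximize the function
\[
    f(e) := \left|\Delta_e(z)\right|^2
\]
over the set $S_k\subseteq V$ of tripotents of rank $k$.

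First I would verify that $S_k$ is a real smooth compact submanifold of $V$. Compactness follows from closedness of the defining equation $e=\JTP{e}{\bar e}{e}$ together with the uniform boundedness of tripotents in the spectral norm associated to the positive involution. Smoothness (in fact connectedness) stems from the transitive action on $S_k$ of the compact group $K$ of automorphisms of $(V,\Bar V)$ commuting with $\vartheta$, which is essentially the existence and uniqueness up to $K$-conjugation of the singular value decomposition. Smoothness of $f$ is immediate from the polynomial expression $\Delta_e(z)=\Delta(e-z,\bar e)$ of Proposition~\ref{prop:MinorVsDeterminant}.

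By compactness, $f$ attains a global maximum at some $e_*\in S_k$. The heart of the argument is to deduce from the critical point equation at $e_*$ that $z$ has no Peirce-$1$ component, i.e.,
\[
    z = z_2 + z_0\qquad\text{with}\qquad z_2\in V_2(e_*),\ z_0\in V_0(e_*).
\]
To this end, $K$-equivariance $\Delta_{g\cdot e}(z)=\Delta_e(g^{-1}z)$ rewrites $f$ on the orbit as $f(g\cdot e_*)=|\Delta_{e_*}(g^{-1}z)|^2$. Differentiating along one-parameter subgroups generated by the derivations $\delta_{x,y}:=D_{x,\bar y}-D_{y,\bar x}$ that span $\Lie(K)$, criticality becomes a vanishing condition for $d\Delta_{e_*}|_z$ on the span of the tangent vectors $\delta_{x,e_*}\,z$, $x\in V$. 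Expanding $\delta_{x,e_*}\,z$ via the Peirce multiplication rules applied to $z=z_2+z_1+z_0$ and choosing $x$ of pure Peirce-$1$ type isolates the $z_1$-contribution and forces $z_1=0$. This Peirce calculation is the step I expect to be the main technical obstacle.

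Once $z=z_2+z_0$, the generalized minor collapses to the Jordan algebra determinant of $z_2$ in the unital Jordan algebra $[e_*]=V_2(e_*)$. The element $z_2$ admits a singular value decomposition $z_2=\tau_1 c_1+\cdots+\tau_k c_k$ relative to a frame of $V_2(e_*)$, and the multiplicativity of the Jordan algebra determinant (or, inductively, Proposition~\ref{prop:JDetRelations}(i)) gives $|\Delta_{e_*}(z_2)|=\tau_1\cdots\tau_k$. Decomposing $z_0$ similarly in $V_0(e_*)$ with singular values $\tau'_1,\ldots,\tau'_{r-k}$ and concatenating the two strongly orthogonal frames produces a frame of $V$. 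Uniqueness of singular values then gives $\{\tau_i\}\cup\{\tau'_j\}=\{\sigma_1,\ldots,\sigma_r\}$ as multisets, whence
\[
    |\Delta_{e_*}(z)|=\tau_1\cdots\tau_k\leq \sigma_1\cdots\sigma_k,
\]
the $\sigma_i$'s being the $k$ largest entries. Since $e_*$ maximizes $f$ on $S_k$, the same bound applies to every $e\in S_k$. Sharpness is witnessed by the tripotent $e_1+\cdots+e_k$ extracted from the singular value decomposition of $z$, completing the proof.
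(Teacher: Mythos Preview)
Your overall plan coincides with the paper's: maximize $f(e)=|\Delta_e(z)|^2$ over the compact manifold $S_k$, show that at a maximizer $e_*$ the Peirce-$1$ component of $z$ vanishes, and finish via the singular value decomposition. The concluding step you sketch (splitting the singular values of $z$ into those of $z_2$ and $z_0$, and evaluating $|\Delta_{e_*}(z_2)|$ with the help of Proposition~\ref{prop:JDetRelations}) is essentially what the paper does.

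The difference lies exactly where you anticipate trouble. Using the $K$-equivariance $\Delta_{ge}(z)=\Delta_e(g^{-1}z)$ you differentiate in the $z$-variable along $\delta_{x,e_*}z$. For $x\in V_1(e_*)$ the Peirce rules give $P_2(\delta_{x,e_*}z)=-\{e_*,\bar x,z_1\}$, so your critical-point condition reads
\[
\Re\bigl(\overline{\Delta_{e_*}(z_2)}\cdot d\Delta_{e_*}\big|_{z_2}(\{e_*,\bar x,z_1\})\bigr)=0\quad\text{for all }x\in V_1(e_*).
\]
It is not clear how to extract $z_1=0$ from this alone: one would need simultaneous control over the linear functional $d\Delta_{e_*}|_{z_2}$ on $V_2(e_*)$ and over the range of the map $x\mapsto\{e_*,\bar x,z_1\}$, and you do not indicate how to obtain either.

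The paper resolves this step by a different device. It differentiates $f$ directly in the $e$-variable and invokes an explicit formula for the derivative of the Jordan pair determinant in terms of quasi-inverses (Lemma~\ref{lem:JDderivative}). After a short manipulation using the symmetry formula for quasi-inverses and the orthogonality $A(e_*)\perp T_{e_*}S_k$, one obtains for $u=u_2+u_1\in T_{e_*}S_k=B(e_*)\oplus V_1(e_*)$
\[
d_uf(e_*)=-\tfrac{2}{p}\,f(e_*)\cdot\Re\tau\bigl((e_*-z)^{\bar e_*},\bar u_1\bigr).
\]
Criticality (with $f(e_*)\neq0$) and nondegeneracy of $\tau$ then force the $V_1$-component of the quasi-inverse $(e_*-z)^{\bar e_*}$ to vanish, and the shifting formula for quasi-inverses converts this back into $e_*-z\in V_2(e_*)\oplus V_0(e_*)$, i.e.\ $z_1=0$. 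This quasi-inverse calculus (Lemma~\ref{lem:JDderivative} together with the symmetry and shifting identities) is the ingredient your proposal is missing; once it is supplied, the two routes are really the same derivative computed in different parametrizations.
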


Before proving this, we determine the derivative of the Jordan pair determinant $\Delta$. Since $(V,\Bar V)$ is assumed to be simple, it follows from \cite[\S\,17.3]{Lo75} that $\Delta$ is irreducible and satisfies
\begin{align}\label{eq:JDetVsDetBerg}
	\Det\B{x}{y} = \Delta(x,y)^p,
\end{align}
where $p$ is a structure constant of $(V,\Bar V)$, and $\Det$ denotes the standard determinant of the Bergman operator $\B{x}{y}\in\End(V)$. Recall that 
\begin{align}\label{eq:traceform}
	\tau:V\times\Bar V\to\kk,\ (x,y)\mapsto \Tr D_{x,y}
\end{align}
is a non-degenerate pairing, called the \emph{trace form} of $(V,\Bar V)$.

\begin{lemma}\label{lem:JDderivative}
	The derivative of the Jordan pair determinant at $(x,y)\in V\times\Bar V$ along
	$(u,v)\in V\times\Bar V$ is given by
	\begin{align}\label{eq:JDderivative}
		d_{(u,v)}\Delta(x,y) = -\tfrac{1}{p}\,\Delta(x,y)\big(\tau(u,y^x)+\tau(x^y,v)\big)\,.
	\end{align}
\end{lemma}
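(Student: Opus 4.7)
The plan is to reduce \eqref{eq:JDderivative} to a first-order expansion of $\Delta$ at the origin by exploiting the shifting formulas of \cite[\S\,16.11]{Lo75}.

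For the infinitesimal step I would plug $z = tv$ into the Bergman operator and take the standard determinant, obtaining
\[
\Det\B{x}{tv}
= \Det\bigl(\Id - t\,D_{x,v} + t^2 Q_xQ_v\bigr)
= 1 - t\,\tau(x,v) + O(t^2).
\]
Extracting the $p$-th root permitted by \eqref{eq:JDetVsDetBerg} (made unambiguous by the normalization $\Delta(x,0) = 1$) yields $\Delta(x, tv) = 1 - (t/p)\,\tau(x,v) + O(t^2)$, whence $d_v\Delta(x, 0) = -\tfrac{1}{p}\,\tau(x, v)$. The dual computation gives $d_u\Delta(0, y) = -\tfrac{1}{p}\,\tau(u, y)$.

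For the general case, assume $(x,y)$ lies in the quasi-invertible locus (this is sufficient, since both sides of \eqref{eq:JDderivative} are rational functions of $(x,y)$). The two shifting formulas from \cite[\S\,16.11]{Lo75},
\[
\Delta(x, y+z) = \Delta(x, y)\,\Delta(x^y, z), \qquad
\Delta(x+w, y) = \Delta(x, y)\,\Delta(w, y^x),
\]
allow one to substitute $z = tv$ and $w = tu$, respectively, and differentiate at $t=0$. Invoking the infinitesimal step above with $x$ replaced by $x^y$ (resp.\ $y$ by $y^x$) produces
\[
d_{(0,v)}\Delta(x,y) = -\tfrac{1}{p}\,\Delta(x,y)\,\tau(x^y, v), \qquad
d_{(u,0)}\Delta(x,y) = -\tfrac{1}{p}\,\Delta(x,y)\,\tau(u, y^x).
\]
Adding the two and using linearity of the derivative yields \eqref{eq:JDderivative}.

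The main obstacle is not the computation itself but the bookkeeping around the shifting formulas: they are formulated under a quasi-invertibility hypothesis in \cite{Lo75}, so some care is needed to legitimise the differentiation, but since both sides of \eqref{eq:JDderivative} are rational in $(x,y)$, verification on the dense open quasi-invertible locus suffices. A purely computational alternative via Jacobi's formula $d\log\Det\B{x}{y} = \Tr\bigl(\B{x}{y}^{-1} d\B{x}{y}\bigr)$ is also available, but the reduction of traces like $\Tr\bigl(\B{x}{y}^{-1} D_{u,y}\bigr)$ to $\tau(u, y^x)$ essentially recapitulates the shifting identities, making the present route the more economical one.
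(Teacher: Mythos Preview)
Your argument is correct, and it takes a genuinely different route from the paper. The paper works directly with Jacobi's formula: it differentiates both sides of $\Det\B{x}{y}=\Delta(x,y)^p$, obtains $d_{(u,v)}\Delta(x,y)=\tfrac{1}{p}\Delta(x,y)\,\Tr\big(\B{x}{y}^{-1}d_{(u,v)}\B{x}{y}\big)$, and then reduces the resulting traces using the operator identities $D_{u,y^x}\B{x}{y}=D_{u,y}-Q_{x,u}Q_y$ and $\B{x}{y}D_{x^y,v}=D_{x,v}-Q_xQ_{y,v}$ from the appendix of \cite{Lo77}. You instead compute the derivative only at the ``origin'' slice (where it is the linear term in $t\mapsto\Det(\Id-tD_{x,v}+O(t^2))$) and then transport this to an arbitrary quasi-invertible $(x,y)$ via the multiplicative shifting identities of \cite[\S\,16.11]{Lo75}. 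Your route avoids the operator-level identities altogether and makes the cocycle nature of $\log\Delta$ manifest; the paper's route is a single computation that does not require separating the $u$- and $v$-directions or invoking the shifting formulas. Amusingly, the alternative you mention in your final paragraph is precisely the proof the paper gives.
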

\begin{proof}
We note that is suffices to prove \eqref{eq:JDderivative} for generic $(x,y)\in V\times\Bar V$, so we may assume that $\Delta(x,y)\neq 0$, or equivalently that $\B{x}{y}$ is invertible. Taking derivatives on both sides of \eqref{eq:JDetVsDetBerg} yields
\[
	p\,\Delta(x,y)^{p-1}\,d_{(u,v)}\Delta(x,y)
	= \Det\B{x}{y}\,\Tr\big(\B{x}{y}^{-1} d_{(u,v)}\B{x}{y}\big)\,,
\]
and hence
\[
	d_{(u,v)}\Delta(x,y) = \tfrac{1}{p}\Delta(x,y)\,\Tr\big(\B{x}{y}^{-1}d_{(u,v)}\B{x}{y}\big)\,.
\]
Since $d_{(u,v)}\B{x}{y} = -D_{u,y} + Q_{x,u}Q_y - D_{x,v}  + Q_xQ_{y,v}$, it follows that 
\[
	\Tr\big(\B{x}{y}^{-1} d_{(u,v)}\B{x}{y}\big)
	= -\Tr D_{u,y^x} - \Tr D_{x^y,v}
	= -\tau(u,y^x) -\tau(x^y,v)\,,
\]
where we used the relations $D_{u,y^x}\B{x}{y}=D_{u,y}-Q_{x,u}Q_y$ and $\B{x}{y}D_{x^y,v} = D_{x,v}-Q_xQ_{y,v}$, see the appendix of \cite{Lo77}. This completes the prove.
\end{proof}

\begin{proof}[Proof of Theorem~\ref{thm:MinorInequality}]
Let $S_k\subseteq V$ denote the subset of tripotents of rank $k$. It is known \cite[\S\S\,5.6, 11.12]{Lo77} that $S_k$ is a compact submanifold, and the tangent space $T_eS_k$ at $e\in S_k$ is given in terms of the Peirce decomposition of $V$ with respect to $e$ in the following way: Recall that the map $x\mapsto x^\#:=Q_e\bar x$ defines an involution on $V_2(e)$ with eigenspace decomposition $V_2(e) = A(e)\oplus B(e)$ where
\[
	A(e):=\Set{x\in V_2(e)}{x=x^\#},\quad
	B(e):=\Set{x\in V_2(e)}{x=-x^\#}.
\]
Then $T_eS_k = B(e)\oplus V_1(e)$. Moreover, recall that the decomposition $V=A(e)\oplus B(e)\oplus V_1(e)\oplus V_0(e)$ is orthogonal with respect to the positive definite inner product $(u,v):=\tau(u,\bar v)$ on $V$. For fixed $z\in V$, we determine the maximum value of the map $f:S_k\to\RR$, $f(e):=|\Delta_e(z)|^2$. Due to Lemma~\ref{prop:MinorVsDeterminant}, it is clear that $f$ is smooth, and Lemma~\ref{lem:JDderivative} implies that the derivative of $f$ at $e\in S_k$ along $u=u_2+u_1\in T_eS_k$ is given by
\[
	d_uf(e) = -\tfrac{2}{p}\,f(e)\cdot\Re\big(\tau(u,(\bar e)^{e-z}) + \tau((e-z)^{\bar e},\bar u)\big)\,.
\]
The symmetry formula for quasi-inverses \cite[\S\,3.3]{Lo75} yields $(\bar e)^{e-z} = \bar e + Q_{\bar e}(e-z)^{\bar e}$, and since $\tau(x,Q_yz) = \tau(z,Q_yx)$ it follows that
\[
	\tau(u,\bar e^{e-z}) = \tau(u,\bar e+Q_{\bar e}(e-z)^{\bar e})
	 = \tau(u,\bar e) + \tau((e-z)^{\bar e},Q_{\bar e} u).
\]
We thus obtain
\[
	d_uf(e) = -\tfrac{2}{p}\,f(e)\cdot\Re\tau((e-z)^{\bar e},\bar u_1)\,,
\]
since $e\in A(e)\perp T_eS_k$ and $Q_{\bar e}u + \bar u = \bar u_1$. Therefore, $f$ attains its maximum value at $e$ only if $\tau((e-z)^{\bar e},\bar u_1) = 0$ for all $u_1\in V_1(e)$, i.e., only if $(e-z)^{\bar e}=x_2+x_0\in V_2(e)\oplus V_0(e)$. In this case, the shifting formula for quasi-inverses \cite[\S\,3.5]{Lo75} yields
\[
	e-z = (x_2+x_0)^{-\bar e} = x_2^{-\bar e} + x_0\in V_2(e)\oplus V_0(e)\,,
\]
so it follows that $z=z_2+z_0\in V_2(e)\oplus V_0(e)$. Now, strong orthogonality of $V_2(e)$ and $V_0(e)$ implies that the singular value decomposition of $z$ splits into the corresponding decompositions of $z_2$ and $z_0$, so $\{\sigma_1,\ldots,\sigma_r\} = \{\mu_1,\ldots,\mu_k\}\cup\{\nu_1,\ldots,\nu_{r-k}\}$, where the $\mu_i$ (resp. $\nu_i$) are the singular values of $z_2$ (resp.\ $z_0$). We claim that $|\Delta_e(z)| = \prod\mu_i$. If so, then $f$ attains its maximum value if $\mu_i = \sigma_i$ for $i=1,\ldots, k$, and we are finished. To evaluate $|\Delta_e(z)|$ first recall that $\Delta_e(z) = \Delta_e(z_2)$. Let $z_2=\mu_1c_1+\cdots+\mu_kc_k$ denote the spectral decomposition of $z_2$. We note that this is not necessarily the spectral decomposition of $z_2\in[e]$ in the sense of Jordan algebras \cite[III.1.1]{FK90}, since the sum $c:=c_1+\cdots+c_k$ might differ from $e$. However, since $[c]=[e]$, Proposition~\ref{prop:JDetRelations}(i) yields $\Delta_e(z_2) = \Delta_c(e)^{-1}\Delta_c(z_2)$. Now, due to Proposition~\ref{prop:MinorVsDeterminant} and \cite[\S\,16.15]{Lo75} we obtain
\[
	\Delta_c(z_2) = \Delta(c-z_2,\bar c) = \prod(1-(1-\mu_i)\cdot 1) = \prod\mu_i.
\]
Finally, recall that $\Delta(u,\bar v) = \Bar{\Delta(v,\bar u)}$ for all $u,v\in V$, where $\Bar\alpha$ denotes complex conjugation of $\alpha\in\kk$. Therefore, Proposition~\ref{prop:JDetRelations}(iii) yields $|\Delta_c(e)|=1$, and we conclude that $|\Delta_e(z_2)| = \prod\mu_i$. This completes the proof.
\end{proof}

We finally give an application of Theorem~\ref{thm:MinorInequality} involving some representation theory. Consider a \emph{complex} simple Jordan pair $(V,\Bar V)$ with involution of rank $r$, and let $\Poly(V)$ denote the space of complex polynomial maps on $V$. Let $L$ denote the identity component of the structure group associated to $(V,\Bar V)$, which consists of linear maps $h\in\GL(V)$ satisfying the relation $h\JTP{x}{y}{z} = \JTP{hx}{h^{-\#}y}{hz}$ where $h^{-\#}$ denotes the inverse of the adjoint of $h$ with respect to the trace form $\tau$ defined in \eqref{eq:traceform}. Then $L$ is a reductive complex Lie group with maximal compact subgroup $K:=L\cap U(V)$, where $U(V)$ denotes unitary operators with respect to the inner product $(u,v)=\tau(u,\Bar v)$ on $V$. The induced action of $K$ on polynomials yields a decomposition of $\Poly(V)$ into irreducible components. Due to Hua, Kostant, Schmid \cite{Hu63,Sc69}, this decomposition is multiplicity free,
\[
	\Poly(V) = \bigoplus_{\b m\geq0}\Poly_\b m(V),
\]
and the irreducible components can be parametrized by tuples $\b m=(m_1,\ldots,m_r)$ of integers satisfying $m_1\geq\cdots\geq m_r\geq 0$ (corresponding to certain highest weights). As an application of Theorem~\ref{thm:MinorInequality}, we obtain a growth condition for polynomials in each component.

\begin{corollary}\label{cor:growthestimate}
	Let $z\in V$ be an element with singular values $\sigma_1\geq\cdots\geq\sigma_r\geq0$.
	For any $p\in\Poly_\b m(V)$ there exists $C>0$ such that
	\[
		|p(z)|\leq C\cdot\sigma_1^{m_1}\cdots\sigma_r^{m_r}.
	\]
\end{corollary}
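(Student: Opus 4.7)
The plan is to reduce the bound to Theorem~\ref{thm:MinorInequality} applied to an explicit highest weight vector of $\Poly_\b m(V)$, and then propagate the estimate to arbitrary $p\in\Poly_\b m(V)$ via the $K$-action.

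First I would fix a frame $(e_1,\ldots,e_r)$ in $V$ and set $c_j:=e_1+\cdots+e_j$, which is a tripotent of rank $j$. With the convention $m_{r+1}:=0$, consider the "conical" polynomial
\[
	\phi_\b m(z):=\prod_{j=1}^r\Delta_{c_j}(z)^{m_j-m_{j+1}}\in\Poly(V),
\]
well-defined as a polynomial because the exponents $m_j-m_{j+1}$ are non-negative integers. By the Hua--Kostant--Schmid theorem \cite{Hu63,Sc69}, $\phi_\b m$ is (a scalar multiple of) a highest weight vector in $\Poly_\b m(V)$ for the $K$-action.

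Next I would apply Theorem~\ref{thm:MinorInequality} to each factor, giving $|\Delta_{c_j}(z)|\leq\sigma_1\cdots\sigma_j$. Multiplying these inequalities and collecting powers of $\sigma_i$ (whose exponent telescopes to $\sum_{j\geq i}(m_j-m_{j+1})=m_i$), one obtains
\[
	|\phi_\b m(z)|\leq\sigma_1^{m_1}\cdots\sigma_r^{m_r}.
\]

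To pass from $\phi_\b m$ to an arbitrary $p\in\Poly_\b m(V)$, I would combine two facts. First, since $K$ acts irreducibly on the finite-dimensional space $\Poly_\b m(V)$, the orbit $K\cdot\phi_\b m$ spans it, so one can write $p=\sum_{i=1}^N\alpha_i\,(\phi_\b m\circ k_i^{-1})$ for finitely many $k_i\in K$ and $\alpha_i\in\CC$. Second, elements $k\in K\subseteq L$ are Jordan triple automorphisms that are unitary with respect to $(u,v)=\tau(u,\bar v)$; in particular, the relation $\overline{kv}=k^{-\#}\bar v$ forces $k$ to send tripotents to tripotents and frames to frames, so $z$ and $k^{-1}z$ have the same singular values. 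Applying the bound for $\phi_\b m$ to each summand then yields
\[
	|p(z)|\leq\Bigl(\sum_i|\alpha_i|\Bigr)\cdot\sigma_1^{m_1}\cdots\sigma_r^{m_r},
\]
and we may take $C:=\sum_i|\alpha_i|$.

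The main obstacle is the identification of $\phi_\b m$ as a highest weight vector in $\Poly_\b m(V)$, which rests on the Hua--Kostant--Schmid description of the conical polynomials associated to each $\b m$; this we would simply cite. Once that is granted, the remainder is a direct combination of Theorem~\ref{thm:MinorInequality} with the elementary observations that $K$ acts irreducibly and preserves singular values.
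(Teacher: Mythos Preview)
Your proposal is correct and follows essentially the same approach as the paper's own proof: both fix a frame, take the conical polynomial $\prod_j\Delta_{c_j}^{m_j-m_{j+1}}$ as highest weight vector in $\Poly_\b m(V)$, bound it factorwise via Theorem~\ref{thm:MinorInequality} with the exponents telescoping to $\sigma_1^{m_1}\cdots\sigma_r^{m_r}$, and then extend to arbitrary $p$ by writing it as a finite $K$-linear combination of the highest weight vector and using $K$-invariance of singular values. The only cosmetic difference is that the paper cites \cite{Up86} rather than \cite{Hu63,Sc69} for the highest weight identification.
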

\begin{proof}
Let $(e_1,\ldots,e_r)$ be a frame of tripotents. Recall from \cite{Up86} that the following polynomial map is a highest weight vector of $\Poly_\b m(V)$ (for an appropriate choice of a Borel subgroup of $L$),
\[
	p_\b m(z)
		:= \Delta_{\epsilon_1}(z)^{m_1-m_2}\cdot\Delta_{\epsilon_2}(z)^{m_2-m_3}\cdots
			 \Delta_{\epsilon_{r-1}}(z)^{m_{r-1}-m_r}\cdot
			 \Delta_{\epsilon_r}(z)^{m_r}
\]
where $\epsilon_k:=e_1+\cdots+e_k$. For $p_\b m$, Theorem~\ref{thm:MinorInequality} immediately yields 
\[
	|p_\b m(z)|\leq \sigma_1^{m_1}\cdots\sigma_r^{m_r}.
\]
For general $p\in\Poly_\b m(V)$, irreducibility implies that there are $c_i\in\CC$ and $k_i\in K$, such that 
\[
	p(z) = \sum_{i=1}^s c_i\cdot p_\b m(k_iz)
\]
Since singular values are invariant under the action of $K$, this proves our statement with $C:=\sum_i |c_i|$.
\end{proof}

We refer to \cite[Theorem~2.2]{S12b} for an application of this growth condition. The main advantage of this estimate is the $K$-invariance of the right hand side.

\bibliographystyle{amsplain}
\bibliography{bibdb}

\end{document}